\title[almost symmetric numerical semigroups with maximal reduced type]{Classification of almost symmetric numerical semigroups with maximal reduced type} 
\author{Akihiro Sugawara}
\subjclass[2020]{20M14, 20M25}
\keywords{numerical semigroup, almost symmetric, maximal reduced type}
\address{Graduate School of Science and Engineering, Yamagata University, Kojirakawa-machi 1-4-12, Yamagata 990-8560, Japan} 
\email{s251436d@st.yamagata-u.ac.jp}
\newtheorem{theorem}{Theorem}
\newtheorem{fact}{Fact} 
\newtheorem{lemma}{Lemma} 
\newtheorem{proposition}{Proposition}
\newtheorem{corollary}{Corollary} 
\newtheorem*{question}{Question}
\theoremstyle{definition}
\newtheorem{definition}{Definition}
\newtheorem{example}{Example} 
\newtheorem{remark}{Remark}
\def\e{{\mathrm e}}
\def\g{{\mathrm g}}
\def\m{{\mathrm m}}
\def\t{{\mathrm t}}
\def\F{{\mathrm F}}
\def\PF{{\mathrm {PF}}}
\def\rPF{{\mathrm {rPF}}}
\def\Ap{{\mathrm{ Ap}}}
\def\max{{\mathrm{ max}}}
\def\max{{\mathrm{max}}}
\def\msg{{\mathrm{ msg }}}
\def\Maximals{\mathrm{Maximals}_{\leq_S}}
\def\N{\mathbb{N}}
\def\Z{\mathbb{Z}}
\def\Ap{\mathrm{Ap}}
\def\int{\mathrm{int}}
\begin{document}
\begin{abstract}
    This paper determines almost symmetric numerical semigroups with maximal reduced type completely. In addition, this paper classifies MED-semigroups with maximal reduced type.
\end{abstract}

\maketitle

\section{Introduction}
Let $k$ be an algebraically closed field of characteristic zero, and let $(R,\mathfrak{m},k)$ be an equicharacteristic reduced one-dimensional complete local $k$-algebra. In \cite{zbMATH07423434}, Huneke, Maitra, and Mukundan introduced the reduced type $s(R)$ of $R$. The reduced type $s(R)$ is related to the Cohen--Macaulay type $\mathrm{type}(R)$, and it is known that $1 \leq s(R) \leq \mathrm{type}(R)$ (see \cite{arXiv:2406.15923}). We say that $R$ has maximal reduced type if $s(R)=\mathrm{type}(R)$. Maitra and Mukundan asked the following (see \cite[Question 2.5]{arXiv:2306.17069}):
\begin{question}\label{question}
    Can we classify when $R$ has maximal reduced type?
\end{question}

Let $S$ be a numerical semigroup. The numerical semigroup ring associated to $S$ is the ring of the form $k[[S]]=k[[t^s:s \in S]]$.
It is well known that the Cohen--Macaulay type of the numerical semigroup ring $k[[S]]$ is given by the cardinality of the set $\PF(S)$ of all pseudo-Frobenius numbers of $S$, where
$$
    \PF(S)=\{ x \in \Z \setminus  S \mid x+s \in S \text{~for all~} 0 \neq s \in S \}.
$$
The cardinality of $\PF(S)$ is called the type of $S$ and it is denoted by $\t(S)$. We say that $S$ is symmetric if $\t(S)=1$.
The smallest element of $S \setminus \{0\}$ is denoted by $\m(S)$, which is called the multiplicity of $S$. The largest element $\F(S)$ of $\Z \setminus S$ is called the Frobenius number of $S$. In \cite[Theorem 2.13]{arXiv:2306.17069}, the authors showed that the reduced type of a numerical semigroup ring $k[[S]]$ is equal to the cardinality of the set
$$
    \rPF(S) := \{ x \in \Z \setminus S : \F(S)-\m(S)+1 \le x \le \F(S) \}.
$$
The cardinality of $\rPF(S)$ is denoted by $s(S)$. Note that $\F(S) \in \rPF(S) \subset \PF(S)$, namely, $1\le s(S)\le \t(S)$. We say that $S$ has maximal reduced type if $s(S)=\t(S)$.

Almost symmetric numerical semigroups were introduced by Barucci and Froberg \cite{zbMATH01013931} and have been studied in several papers \cite{zbMATH06147883}, \cite{zbMATH06583408}, \cite{zbMATH06768981}, \cite{zbMATH07058163}. Answering Question above, this paper classifies almost symmetric numerical semigroups with maximal reduced type. For any positive integer $m$, the numerical semigroup $\Delta(m)=\{x \in \N \mid x \ge m\} \cup \{0\}$ is called a half-line. It is easy to show that if $S$ is symmetric or $S$ is a half-line, then $S$ is an almost symmetric numerical semigroup with maximal reduced type; so we determine almost symmetric numerical semigroups with maximal reduced type except for these cases as follows.

\begin{theorem}\label{MainTheorem}
    Let $S$ be a non-symmetric numerical semigroup
    with $\m(S)<\F(S)$. Then the following conditions are equivalent:
    \begin{itemize}
        \item[(1)] $S$ is an almost symmetric numerical semigroup with maximal reduced type;
        \item[(2)] There exist integers $t,m$ such that $2\le t<m$ and $S=\Delta(m) \setminus \{2m-t\}$.
    \end{itemize}
\end{theorem}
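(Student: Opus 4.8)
The plan is to prove the two implications separately. For $(2)\Rightarrow(1)$ I would take $S=\Delta(m)\setminus\{2m-t\}$ with $2\le t<m$ and compute everything by hand. First I would check $S$ is a numerical semigroup: any two nonzero elements are $\ge m$, so their sum is $\ge 2m>2m-t$ and hence lies in $S$. Then $\m(S)=m$ and $\F(S)=2m-t$, so $\F(S)-\m(S)+1=m-t+1$. Since the gaps are exactly $\{1,\dots,m-1\}\cup\{2m-t\}$, an element $x\in\{1,\dots,m-1\}$ lies in $\PF(S)$ precisely when $2m-t-x\notin S\setminus\{0\}$ (the only gap reachable as $x+s$ with $s\in S\setminus\{0\}$ is $\F(S)$), i.e. when $x\ge m-t+1$; hence $\PF(S)=\{m-t+1,\dots,m-1\}\cup\{2m-t\}$. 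All of these lie in $[m-t+1,2m-t]=[\F(S)-\m(S)+1,\F(S)]$, so $\rPF(S)=\PF(S)$, giving $s(S)=\t(S)=t$ and maximal reduced type; the pairing $(m-t+i)+(m-t+(t-i))=2m-t=\F(S)$ shows $S$ is almost symmetric.

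For the converse $(1)\Rightarrow(2)$, set $m=\m(S)$, $F=\F(S)$ and write $\PF(S)=\{f_1<\cdots<f_t=F\}$ with $t\ge2$ (non-symmetric). I would combine three inputs: maximal reduced type, which together with $\rPF(S)\subseteq\PF(S)$ forces $\PF(S)\subseteq[F-m+1,F]$ (no pseudo-Frobenius number is $\le F-m$); the standard characterization of almost symmetry $f_i+f_{t-i}=F$ for $1\le i\le t-1$; and the auxiliary fact that if $x$ and $F-x$ are both gaps then both are pseudo-Frobenius. From $f_1\ge F-m+1$ and $f_1+f_{t-1}=F$ I get $f_{t-1}\le m-1$, while $f_{t-1}\ge F-m+1$ forces $F\le 2m-2$; thus $\PF(S)\setminus\{F\}\subseteq[F-m+1,m-1]$.

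Next I would show this interval consists entirely of pseudo-Frobenius numbers: each $x\in[F-m+1,m-1]$ is a gap (positive and $<m$), and its mirror $F-x$ again lies in $[F-m+1,m-1]$ and so is a gap, whence $x\in\PF(S)$ by the auxiliary characterization. Counting the $2m-F-1$ integers in the interval gives $t-1=2m-F-1$, i.e. $F=2m-t$, and $2\le t<m$ becomes $m<F\le 2m-2$, which holds. Finally I would rule out large gaps other than $F$: if $g>m$ is a gap with $g\ne F$, then $g\notin\PF(S)$ (the only pseudo-Frobenius number exceeding $m-1$ is $F$), so $g\le_S f$ for some $f\in\PF(S)$; then $f-g\in S\setminus\{0\}$ gives $f\ge g+m>2m$, contradicting $f\le F\le 2m-2$. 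Hence the gaps of $S$ are exactly $\{1,\dots,m-1\}\cup\{F\}$, that is $S=\Delta(m)\setminus\{2m-t\}$.

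The main obstacle is the converse, and within it the interplay between maximal reduced type and almost symmetry: the real content is the squeeze $F-m+1\le f_{t-1}\le m-1$, which simultaneously bounds $F$ and confines every non-Frobenius pseudo-Frobenius number to the short interval $[F-m+1,m-1]$, together with the clean "no large gaps" argument that exploits $F\le 2m-2$. I would need to ensure the almost-symmetric characterizations invoked (the pairing $f_i+f_{t-i}=F$ and the gap-mirror property) are recalled in the preliminaries, and to double-check the index conventions so that $f_{t-1}$ is genuinely the largest pseudo-Frobenius number below $F$.
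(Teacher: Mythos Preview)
Your proof is correct and follows essentially the same strategy as the paper: for $(2)\Rightarrow(1)$ both compute $\PF(S)$ directly, and for $(1)\Rightarrow(2)$ both combine Nari's almost-symmetric characterization with maximal reduced type to obtain the key bound $\F(S)\le 2\m(S)-2$, then eliminate gaps in $(\m(S),2\m(S))\setminus\{\F(S)\}$ to conclude $S=\Delta(\m(S))\setminus\{\F(S)\}$. The only tactical differences are in the order and mechanism: you first pin down $\PF(S)=[\F(S)-\m(S)+1,\m(S)-1]\cup\{\F(S)\}$ via the gap-mirror form of almost symmetry and count to get $\F(S)=2\m(S)-\t(S)$, then rule out a large gap $g$ by the domination $g\le_S f$ forcing $f>2\m(S)$; the paper instead rules out large gaps first (via Lemma~\ref{lemma1} and Nari) and recovers $\F(S)=2\m(S)-\t(S)$ afterwards from the genus formula.
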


We note that $S$ is a half-line if and only if $\m(S)\ge \F(S)$.

\begin{corollary}\label{cor1}
    Let $S$ be a numerical semigroup. Then $S$ is an almost symmetric numerical semigroup with maximal reduced type if and only if one of the following conditions holds:
    \begin{itemize}
        \item[(1)] $S$ is a half-line;
        \item[(2)] $S$ is symmetric;
        \item[(3)] There exist integers $t,m$ such that $2\le t<m$ and $S=\Delta(m) \setminus \{2m-t\}$.
    \end{itemize}
\end{corollary}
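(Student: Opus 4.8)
The plan is to derive Corollary \ref{cor1} from Theorem \ref{MainTheorem} by a case analysis on $\t(S)$ and on whether $S$ is a half-line, supplying direct verifications only for the two boundary families (symmetric semigroups and half-lines). I would first treat the three forward implications. If $S$ is symmetric (condition (2)), then $\t(S)=1$; since $1\le s(S)\le \t(S)$ always holds we get $s(S)=1=\t(S)$, and a symmetric semigroup is almost symmetric by definition. If $S=\Delta(m)\setminus\{2m-t\}$ with $2\le t<m$ (condition (3)), then $S$ is a numerical semigroup with $\m(S)=m<2m-t=\F(S)$, and it is non-symmetric because the two gaps $m-1$ and $m-t+1$ sum to $\F(S)$; hence the hypotheses of Theorem \ref{MainTheorem} are met and the implication $(2)\Rightarrow(1)$ gives exactly that $S$ is almost symmetric with maximal reduced type.

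It remains to handle the half-line (condition (1)), which I would verify by direct computation. For $S=\Delta(m)$ (with $m\ge 2$) the gaps are $\{1,\dots,m-1\}$; any gap $j$ and any $0\ne s\in S$ satisfy $j+s\ge m+1$, and every integer $\ge m$ lies in $S$, so $j+s\in S$ and therefore $\PF(S)=\{1,\dots,m-1\}$, $\t(S)=m-1$. Writing $f_i=i$, the symmetry relations $f_i+f_{(m-1)-i}=m-1=\F(S)$ hold, so $S$ is almost symmetric. Since $\F(S)-\m(S)+1=(m-1)-m+1=0$, the defining window of $\rPF(S)$ is $0\le x\le m-1$, giving $\rPF(S)=\{1,\dots,m-1\}=\PF(S)$ and hence $s(S)=\t(S)$. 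This is the ``easy'' assertion already recorded in the introduction.

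For the converse, let $S$ be almost symmetric of maximal reduced type. If $\t(S)=1$ then $S$ is symmetric and condition (2) holds. If $\t(S)\ge 2$ then $S$ is non-symmetric, and I would split on the noted equivalence that $S$ is a half-line exactly when $\m(S)\ge \F(S)$: when $\m(S)\ge \F(S)$ the semigroup $S$ is a half-line and condition (1) holds, whereas when $\m(S)<\F(S)$ all hypotheses of Theorem \ref{MainTheorem} are satisfied, so the implication $(1)\Rightarrow(2)$ furnishes integers $t,m$ with $2\le t<m$ and $S=\Delta(m)\setminus\{2m-t\}$, which is condition (3). These alternatives are exhaustive, completing the equivalence.

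The substantive mathematics is entirely contained in Theorem \ref{MainTheorem}, which I am treating as given; within the corollary the only steps needing care are the explicit pseudo-Frobenius and reduced-type computation for the half-line and the check that the trichotomy exhausts all cases, the latter resting solely on the dichotomy ``half-line $\iff \m(S)\ge \F(S)$''. I anticipate no real obstacle beyond this bookkeeping; the one mild subtlety is that conditions (1) and (2) overlap (for instance $\Delta(2)$ is a symmetric half-line), but this is harmless since the corollary only claims that at least one condition holds.
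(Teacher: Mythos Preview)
Your argument is correct and matches the paper's approach: the paper does not spell out a proof of Corollary~\ref{cor1} but simply records in the introduction that the symmetric and half-line cases are ``easy'' and that Theorem~\ref{MainTheorem} covers the remaining non-symmetric, non-half-line case, which is exactly the case split you carry out. Your explicit computations for $\Delta(m)$ and your non-symmetry check for $\Delta(m)\setminus\{2m-t\}$ (which the paper instead obtains from Proposition~\ref{prop1}(3)) are sound fill-ins of details the paper leaves to the reader.
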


This paper is organized as follows. In Section \ref{Preliminaries}, we recall some definitions and results about numerical semigroups. In Section \ref{Almost}, we prove Theorem \ref{MainTheorem} and show that if $t$ and $e$ are integers such that $2\le t\le e-1$, then there exists an almost symmetric numerical semigroup with maximal reduced type such that $\t(S)=t$ and $\e(S)=e$. In Section \ref{MED}, we classify MED-semigroups with maximal reduced type.

\section{Preliminaries}\label{Preliminaries}
First, we recall that some basic facts on numerical semigroups.
Let $\Z$ be the set of integers and let $\N=\{z\in \Z \mid z\ge 0\}$. A submonoid of $(\N,+)$ is a subset of $\N$ which is closed under the addition and contains the element $0$. A numerical semigroup is a submonoid $S$ of $(\N,+)$ such that $\N\setminus S$ is finite. Let $S$ be a numerical semigroup. The set $\N \setminus S$ is called the set of gaps of $S$.
Its cardinality is said to be the genus of $S$ and it is denoted by $\g(S)$. Let $A$ be a nonempty subset of $\N$. We denote by $\langle A \rangle$ the submonoid of $(\N,+)$ generated by $A$, that is,
$$
    \langle A \rangle =\{  \lambda_1a_1+\dots+\lambda_na_n \mid n\in
    \N \setminus \{0\}, \, a_1,\dots, a_n \in A \mbox{ and }
    \lambda_1,\dots,\lambda_n \in \N \}.
$$
It is well known that every numerical semigroup has a unique minimal system of generators and it is denoted by $\msg(S)$. Note that $\m(S)=\min(\msg(S))$. The cardinality $\e(S)$ of the minimal system of generators is called the embedding dimension of $S$. It is well known that the inequality $\e(S)\le\m(S)$ holds. We say that $S$ has maximal embedding dimension (MED-semigroup) if $\e(S)=\m(S)$.
In general, the following inequality holds (see \cite[Proposition 2.2]{zbMATH06147883}):
$$
    \g(S) \geq \frac{\F(S)+\t(S)}{2}.
$$

\begin{definition}
    A numerical semigroup $S$ is almost symmetric if $\g(S)=\frac{\F(S)+\t(S)}{2}$.
\end{definition}

Let $n \in S \setminus\{0\}$. We define the Ap{\'e}ry set of $n$ in $S$ as follows:
$$
    \Ap(S,n)=\{s \in S \mid s-n \notin S\}.
$$
Then we have the following result (see \cite[Lemma 2.4]{zbMATH05623301}):
$$
    \Ap(S,n)=\{0=w(0),w(1),\dots,w(n-1)\},
$$
where $w(i)$ is the least element of $S$ congruent with $i$ modulo $n$, for all $i \in \{0,\dots,n-1\}$. On $\Ap(S,n)$, we define the partial order $x\le_S y$ if $x+s=y$ for some $s \in S$. We denote the maximal elements of $\Ap(S,n)$ with respect to $\le_S$ by $\Maximals \Ap(S,n)$. We recall that
$$
    \PF(S)=\{ w-n \mid \Maximals\Ap(S,n)\}
$$
(see \cite[Proposition 2.20]{zbMATH05623301}). In particular, $\F(S)=\max\Ap(S,n)-n$. Moreover, if $S$ is not equal to $\N$, it is easy to see that
$$
    1\le \t(S) \le \m(S)-1.
$$
It is well known that when $S$ is not equal to $\N$, $S$ is a MED-semigroup if and only if $\t(S)=\m(S)-1$ (see \cite[Corollary 3.2]{zbMATH05623301}).

\section{Almost symmetric numerical semigroups with maximal reduced type}\label{Almost}
Our first goal in this section is to prove Theorem \ref{MainTheorem}. Let $t$ and $m$ be integers such that $2 \leq t < m$. It is clear that
$$
    \Delta(m) \setminus\{2m-t\}
$$
is a numerical semigroup. We show that this numerical semigroup is an almost symmetric numerical semigroup with maximal reduced type.\footnote{Similar results to Proposition \ref{prop1} were independently obtained by Kriti Goel, N\.{i}l \c{S}ah\.{i}n, Srishti Singh and Hema Srinivasan in \cite{goel2025numericalsemigroupssallytype}.}

\begin{proposition}\label{prop1}
    Let $t$ and $m$ be integers such that $2 \leq t < m$. We consider a numerical semigroup
    $$
        S= \Delta(m) \setminus\{2m-t\}.
    $$
    Then the following hold:
    \begin{itemize}
        \item[(1)] $\F(S)=2m-t, \m(S)=\g(S)=m$;
        \item[(2)] $\rPF(S) =\PF(S)=\{m-t+1,\dots,m-1,2m-t\}$;
        \item[(3)] $s(S)=\t(S)=t$;
        \item[(4)] $S$ is almost symmetric.
    \end{itemize}
    In particular, $S$ is an almost symmetric numerical semigroup with maximal reduced type.
\end{proposition}

\begin{proof}
    (1) is trivial and (3) follows from (2).

    (2) Since $\F(S)-\m(S)+1=m-t+1$, we have that
    $$
        \{m-t+1,\dots,m-1,2m-t\} = \rPF(S) \subset \PF(S).
    $$
    Let $x \in \PF(S)$. We may assume that $x\ge 0$. If $x \in \{1,\dots,m-t\}$, then
    $$
        \F(S)-x=2m-(t+x) \in \{m,\dots,2m-t-1\} \subset S \setminus \{0\}.
    $$
    According to the maximality of $x \in \PF(S)$, this is a contradiction. It follows that $x \ge m-t+1$, namely,
    $$
        \rPF(S) =\PF(S)=\{m-t+1,\dots,m-1,2m-t\}.
    $$

    (4) From (1), (3), we get
    $$
        \g(S)=\frac{\F(S)+\t(S)}{2}.
    $$
    This implies that $S$ is almost symmetric.
\end{proof}

\begin{example}
    Taking $m=7,t=4$ in Proposition \ref{prop1}, we obtain that $\rPF(S)=\PF(S)=\{4,5,6,10\}$.
\end{example}

\begin{lemma} \label{lemma1}
    Let $S$ be a numerical semigroup with $\m(S)< \F(S) < 2\m(S)$. Then
    $$
        \{  g \in \N\setminus S \mid \m(S)<g\le \F(S)\} \subset \rPF(S).
    $$
\end{lemma}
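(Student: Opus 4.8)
The plan is to unwind the definition of $\rPF(S)$ and verify the relevant two-sided bound directly. Recall that $\rPF(S)=\{x\in\Z\setminus S : \F(S)-\m(S)+1\le x\le \F(S)\}$. So, fixing a gap $g\in\N\setminus S$ with $\m(S)<g\le \F(S)$, it suffices to check that $g$ lies in the interval from $\F(S)-\m(S)+1$ to $\F(S)$; the condition $g\notin S$ is already part of the hypothesis, so nothing more is needed on that front.

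The upper bound $g\le \F(S)$ is given outright, so the only thing left to prove is the lower bound $g\ge \F(S)-\m(S)+1$, equivalently $g>\F(S)-\m(S)$. Here I would invoke the standing hypothesis $\F(S)<2\m(S)$: it rearranges to $\F(S)-\m(S)<\m(S)$. Combining this with the assumption $g>\m(S)$ gives the chain $g>\m(S)>\F(S)-\m(S)$, and since every quantity involved is an integer, $g>\F(S)-\m(S)$ is the same as $g\ge \F(S)-\m(S)+1$.

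Assembling the two bounds together with $g\notin S$ then places $g$ in $\rPF(S)$, which is exactly the claimed inclusion. The argument is essentially a one-line chain of inequalities, so I do not anticipate a genuine obstacle; the only subtlety worth flagging is that the strict hypothesis $\F(S)<2\m(S)$ is doing real work, as it is precisely what forces $\F(S)-\m(S)$ to sit strictly below $\m(S)$, and hence strictly below $g$. Were this inequality to fail, the left endpoint $\F(S)-\m(S)+1$ could rise above $\m(S)+1$, and gaps $g$ just above $\m(S)$ would fall outside $\rPF(S)$, so this is the one place where the hypothesis must be used rather than merely cited.
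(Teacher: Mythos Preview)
Your proof is correct and follows essentially the same approach as the paper: both verify the defining inequality $\F(S)-\m(S)+1\le g$ by using $\F(S)<2\m(S)$ to get $\F(S)-\m(S)<\m(S)<g$, and the upper bound $g\le \F(S)$ is given by hypothesis.
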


\begin{proof}
    Let $g \in \N \setminus S$ be such that $\m(S)<g \le \F(S)$.
    Since $\F(S) < 2\m(S)$, We obtain that $\F(S)-\m(S)+1 \le g$. Therefore, $g \in \rPF(S)$.
\end{proof}

\begin{proof}[Proof of Theorem \ref{MainTheorem}]
    $(2)\Rightarrow(1)$. It follows from Proposition \ref{prop1}.

    $(1)\Rightarrow(2)$. Since $S$ is not symmetric, there exists a pseudo-Frobenius number $f \in \PF(S)$ such that $f \neq \F(S)$. By \cite[Theorem 2.4]{zbMATH06147883}, we get $\F(S)-f \in \PF(S)$. So we may assume that $f \le \frac{\F(S)}{2}$. Since $S$ has maximal reduced type, we obtain that $\F(S) -\m(S)+1 \le  \frac{\F(S)}{2}$. Then
    $$
        \frac{\F(S)}{2} \le \m(S)-1.
    $$
    According to this, it follows that if $x$ is an integer such that $x \geq 2\m(S)$, then $x \in S$. We prove that if $x \in \Z \setminus \{\F(S)\}$ and $\m(S)<x<2\m(S)$, then $x \in S$. Assume by contradiction that there exists $x \in \Z \setminus \{\F(S)\}$ such that $\m(S)<x<2\m(S)$ and $x \notin S$. By Lemma \ref{lemma1}, we have that $x \in \rPF(S) \setminus \{\F(S)\}$. Since $S$ is an almost symmetric numerical semigroup with maximal reduced type, using \cite[Theorem 2.4]{zbMATH06147883}, we obtain that $\F(S)-x \in \rPF(S)$. Hence, $\F(S)-\m(S)+1 \le \F(S)-x$. This implies that $x \le \m(S)-1$, which is a contradiction. Then
    $$
        S=\Delta(\m(S)) \setminus \{\F(S)\}.
    $$
    Note that $\g(S)=\m(S)$. Since $S$ is almost symmetric, it follow that $\m(S)= \frac{\F(S)+\t(S)}{2}$. Therefore,
    $$
        S= \Delta(\m(S)) \setminus \{2\m(S)-\t(S)\}.
    $$
\end{proof}

\begin{remark}
    The inequality  $\frac{\F(S)}{2} \le \m(S)-1$ in the proof of Theorem \ref{MainTheorem} is proved in \cite[Proposition 3.16(1)]{arXiv:2306.17069}.
\end{remark}

Our next aim is to show that for any integers $t,e$ with $2\le t\le e-1$, there exists an almost symmetric numerical semigroup with maximal reduced type such that $\t(S)=t$ and $\e(S)=e$. The following fact is well known and appears in \cite{zbMATH05623301}.

\begin{fact}\label{fact1}
    Let $S$ be a numerical semigroup such that $S \neq\N$. Then the following hold:
    \begin{itemize}
        \item[(1)] $\msg(S)=(S \setminus\{0\}) \setminus \{s+t \mid s,t \in S \setminus\{0\}\}$;
        \item[(2)] If $s,t \in S$ and $s+t \in \Ap(S,\m(S))$, then $s,t \in \Ap(S,\m(S))$;
        \item[(3)] $\msg(S)  \subset (\Ap(S,\m(S)) \setminus\{0\})\cup \{\m(S)\}$.
    \end{itemize}
\end{fact}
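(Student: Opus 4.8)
The plan is to prove part (1) first, since it is the engine that drives part (3), and to treat part (2) as an independent short lemma about the Ap\'ery set. For (1), I would introduce the set $A=(S\setminus\{0\})\setminus\{s+t\mid s,t\in S\setminus\{0\}\}$ of \emph{indecomposable} nonzero elements and show directly that $A=\msg(S)$ by verifying the two defining properties of a minimal generating set: that $A$ generates $S$, and that $A$ is contained in every generating set of $S$.

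For the generation claim, I would argue by strong induction on $n\in S\setminus\{0\}$ that $n$ lies in $\langle A\rangle$: if $n\in A$ there is nothing to do, and otherwise $n=s+t$ with $s,t\in S\setminus\{0\}$, where necessarily $s,t<n$, so the inductive hypothesis writes each of $s,t$ as a sum of elements of $A$ and hence does the same for $n$. For minimality, let $G$ be any generating set and $a\in A$; expanding $a=\sum\lambda_g g$ over $g\in G$ and noting $a\neq 0$, the sum is nonempty, and if it involved two or more summands counted with multiplicity, then $a$ would be a sum of two nonzero elements of $S$, contradicting $a\in A$. Hence $a=g\in G$, so $A\subseteq G$. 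Taking $G=\msg(S)$ and combining with the generation claim yields $A=\msg(S)$, which is the assertion of (1).

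Part (2) I would dispatch by contraposition using only closure under addition. Writing $m=\m(S)$, suppose $s\notin\Ap(S,m)$, i.e. $s-m\in S$; then $(s-m)+t=(s+t)-m\in S$, so $s+t\notin\Ap(S,m)$, and symmetrically if $t\notin\Ap(S,m)$. This gives the contrapositive of (2). Finally, part (3) follows from (1): if $a\in\msg(S)$ with $a\neq m$, then $a>m$ because $m=\min(S\setminus\{0\})$; were $a-m\in S$ it would be a positive element of $S$, making $a=m+(a-m)$ a sum of two nonzero elements of $S$, contradicting (1). Hence $a-m\notin S$, that is $a\in\Ap(S,m)\setminus\{0\}$.

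I do not expect any serious obstacle, as all three statements are standard. The only point requiring a little care is the minimality half of (1), where one must rule out multi-term expansions of an indecomposable element; the bookkeeping of ``two or more summands counted with multiplicity'' is exactly what makes that step go through.
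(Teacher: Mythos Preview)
Your proof is correct, and each of the three parts is handled cleanly: the induction for generation in (1), the ``two summands with multiplicity'' bookkeeping for minimality, the contrapositive for (2), and the deduction of (3) from (1) are all sound.

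There is, however, nothing to compare against: the paper does not prove Fact~1. It merely records it as well known and cites the textbook of Rosales and Garc\'{\i}a-S\'anchez. Your argument is essentially the standard one found there (indecomposables form the unique minimal generating set; the Ap\'ery set is closed downward under the partial order $\le_S$), so you have supplied what the paper omitted rather than diverged from it.
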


\begin{lemma}\label{lemma2}
    Let $S$ be a numerical semigroup such that $S \neq \N$, let $n \in S \setminus \{0\}$ and let $\msg(S)=\{\m(S)=n_1 < \cdots<n_e\}$. If $n \in \Ap(S,\m(S)) \setminus \{0\}$ and $n \leq 2n_2 -1$, then $n \in \msg(S)$.
\end{lemma}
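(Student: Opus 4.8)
The plan is to use the characterization of minimal generators from Fact \ref{fact1}(1), according to which $n \in \msg(S)$ precisely when $n \in S \setminus \{0\}$ and $n$ cannot be written as $s+t$ with $s,t \in S \setminus \{0\}$. Since $n \in \Ap(S,\m(S)) \setminus \{0\} \subset S \setminus \{0\}$, it suffices to rule out any such decomposition, and I would argue by contradiction: suppose $n = s+t$ with $s,t \in S \setminus \{0\}$.

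The crucial preliminary observation is that the smallest nonzero element of $\Ap(S,\m(S))$ is exactly $n_2$. Indeed, $n_2 \in \msg(S)$ and $n_2 > \m(S)$, so by Fact \ref{fact1}(3) we have $n_2 \in \Ap(S,\m(S)) \setminus \{0\}$; conversely, any $x \in \Ap(S,\m(S)) \setminus \{0\}$ is a nonzero element of $S$ that is not a multiple of $\m(S)$, since a nonzero Apéry element is congruent to a nonzero residue modulo $\m(S)$. Consequently, its expression in terms of $\msg(S)$ must involve some generator other than $n_1 = \m(S)$, forcing $x \geq n_2$. This is the step where the hypothesis $n \leq 2n_2 - 1$ will be put to use.

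Given the decomposition $n = s+t$, Fact \ref{fact1}(2) applies because $n \in \Ap(S,\m(S))$, yielding $s,t \in \Ap(S,\m(S))$; as $s,t \neq 0$, the observation above gives $s \geq n_2$ and $t \geq n_2$. Hence $n = s+t \geq 2n_2$, contradicting $n \leq 2n_2 - 1$. Therefore $n$ admits no decomposition into two nonzero elements of $S$, and Fact \ref{fact1}(1) gives $n \in \msg(S)$.

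I expect the main obstacle to be the preliminary observation that every nonzero Apéry element is at least $n_2$; everything else is a short application of Fact \ref{fact1} together with a one-line inequality. The care needed there is to justify cleanly that an element of $S$ lying strictly below $n_2$ must be a multiple of $\m(S)$ and hence cannot belong to $\Ap(S,\m(S)) \setminus \{0\}$.
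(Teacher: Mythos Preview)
Your proof is correct and follows essentially the same argument as the paper: assume $n=s+t$ with $s,t\in S\setminus\{0\}$, apply Fact~\ref{fact1}(2) to get $s,t\in\Ap(S,\m(S))\setminus\{0\}$, and use $\min(\Ap(S,\m(S))\setminus\{0\})=n_2$ to derive the contradiction $n\ge 2n_2$. The only difference is that you justify the identity $\min(\Ap(S,\m(S))\setminus\{0\})=n_2$ in detail, whereas the paper simply asserts it.
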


\begin{proof}
    Let $n \in \Ap(S,\m(S))\setminus\{0\}$ be such that $n \leq 2n_2 -1$. Assume that $n \notin \msg(S)$. By Fact \ref{fact1}(1), $n=s+t$ for some $s,t \in S \setminus\{0\}$. Since $n \in \Ap(S,\m(S))$, using Fact \ref{fact1}(2), it follows that $s,t \in \Ap(S,\m(S)) \setminus\{0\}$. Since $\min \Ap(S,\m(S)) \setminus\{0\}=n_2$, we have that $n_2 \le s, n_2 \le t$. This implies that
    $$
        n=s+t \ge 2n_2,
    $$
    which is a contradiction. Therefore, $n \in \msg(S)$.
\end{proof}

\begin{proposition}\label{prop2}
    Let $t$ and $m$ be  integers such that $2 \leq t < m$ and let
    $$
        S= \Delta(m) \setminus\{2m-t\}.
    $$
    Then,
    $$
        \Ap(S,m)=
        \begin{cases}
            \{0,m+2,\dots,2m-1,2m+1\}                            & \quad(t=m-1)    ; \\
            (\{0,m+1,\dots,2m-1\}\setminus\{2m-t\})\cup \{3m-t\} & \quad(t\le m-2).  \\
        \end{cases}
    $$
    Moreover,
    $$
        \msg(S)=  \begin{cases}
            \{m,m+2,\dots,2m-1,2m+1\}             & \quad(t=m-1);     \\
            \{m,m+1,\dots,2m-1\}\setminus\{2m-t\} & \quad(t \le m-2).
        \end{cases}
    $$
    In particular,
    $$
        \e(S)=
        \begin{cases}
            m   & \quad(t=m-1);     \\
            m-1 & \quad(t \le m-2). \\
        \end{cases}
    $$
\end{proposition}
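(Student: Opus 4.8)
The plan is to compute the Ap\'ery set $\Ap(S,m)$ directly from the explicit description of $S$, and then extract $\msg(S)$ by combining Fact \ref{fact1} with the sufficient condition for being a minimal generator provided by Lemma \ref{lemma2}.

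First I would record that the gaps of $S$ are exactly $\{1,\dots,m-1\}\cup\{2m-t\}$, so that $S$ meets $[m,\infty)$ in $\{m,m+1,\dots\}\setminus\{2m-t\}$. Using the characterization from Section \ref{Preliminaries} that $\Ap(S,m)=\{w(0),\dots,w(m-1)\}$, where $w(i)$ is the least element of $S$ congruent to $i$ modulo $m$, I would argue class by class. Clearly $w(0)=0$. For $1\le i\le m-1$ with $m+i\ne 2m-t$, the element $m+i$ lies in $[m,2m-1]$ and is not the unique gap $2m-t$ of $S$ in that interval, so $w(i)=m+i$. The only exceptional class is $i=m-t$, where $m+i=2m-t\notin S$; here the next candidate is $2m+i=3m-t$, and since $3m-t>2m>m$ and $3m-t\ne 2m-t$ we have $3m-t\in S$, whence $w(m-t)=3m-t$. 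This gives
$$
    \Ap(S,m)=\bigl(\{0,m+1,\dots,2m-1\}\setminus\{2m-t\}\bigr)\cup\{3m-t\}.
$$
When $t=m-1$ the excluded class is $i=1$, and there $2m-t=m+1$ and $3m-t=2m+1$, so the displayed set specializes to $\{0,m+2,\dots,2m-1,2m+1\}$; this accounts for the splitting into the two cases of the statement.

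Next I would compute $\msg(S)$. By Fact \ref{fact1}(3) we have $\msg(S)\subset(\Ap(S,m)\setminus\{0\})\cup\{m\}$, and $m=n_1$ is a generator, so it remains to decide which elements of $\Ap(S,m)\setminus\{0\}$ are minimal generators. I would first identify $n_2=\min(\Ap(S,m)\setminus\{0\})$, which is $m+1$ when $t\le m-2$ and $m+2$ when $t=m-1$. Lemma \ref{lemma2} then shows that every $n\in\Ap(S,m)\setminus\{0\}$ with $n\le 2n_2-1$ lies in $\msg(S)$. In the case $t=m-1$ every element of $\Ap(S,m)\setminus\{0\}$ is at most $2m+1=2n_2-1$, so they are all generators, giving $\msg(S)=\{m,m+2,\dots,2m-1,2m+1\}$ and hence $\e(S)=m$.

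The remaining point, and the only one requiring genuine care, is the element $3m-t$ in the case $t\le m-2$: here $2n_2-1=2m+1$, while $m-t\ge 2$ forces $3m-t\ge 2m+2$, so Lemma \ref{lemma2} does not apply and I must decide its membership directly. I would write $3m-t=(m+1)+(2m-t-1)$ and check that both summands are nonzero elements of $S$: indeed $m+1\in S$, and since $t\le m-2$ we have $2m-t-1\ge m+1>0$ with $2m-t-1\ne 2m-t$, so $2m-t-1\in S$. By Fact \ref{fact1}(1) this forces $3m-t\notin\msg(S)$, while all the remaining Ap\'ery elements $m+1,\dots,2m-1$ (other than $2m-t$) are minimal generators by Lemma \ref{lemma2}. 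Assembling this yields $\msg(S)=\{m,m+1,\dots,2m-1\}\setminus\{2m-t\}$, and a count of cardinalities gives $\e(S)=m-1$, completing both cases.
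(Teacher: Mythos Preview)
Your proof is correct and follows essentially the same approach as the paper: compute $\Ap(S,m)$ explicitly, then use Fact~\ref{fact1}(3) together with Lemma~\ref{lemma2} to certify the small Ap\'ery elements as minimal generators, and for $t\le m-2$ eliminate $3m-t$ via the decomposition $3m-t=(m+1)+(2m-t-1)$ and Fact~\ref{fact1}(1), exactly as the paper does. One small slip: in the case $t=m-1$ you write ``at most $2m+1=2n_2-1$'', but $n_2=m+2$ gives $2n_2-1=2m+3$; the inequality $2m+1\le 2n_2-1$ is all you need, and it holds.
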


\begin{proof}
    Let $\msg(S)=\{m=n_1<\cdots<n_e\}$.

    ($t=m-1$). By the assumption, we have that $S=\Delta(m)\setminus\{m+1\}$. Then, $n_2=m+2$ and
    $$
        \Ap(S,m)=\{0,m+2,\dots,2m-1,2m+1\}.
    $$
    It follows from Fact \ref{fact1}(3) and Lemma \ref{lemma2} that
    $$
        \msg(S)=\{m,m+2,\dots,2m-1,2m+1\},
    $$
    namely, $\e(S)=m$.

    ($t \le m-2$). By $2\le t\le m-2$, we have that $m+2 \le 2m-t \le 2m-2$.
    It follows that
    $$
        \Ap(S,m)= (\{0,m+1,\dots,2m-1\}\setminus\{2m-t\})\cup \{3m-t\}.
    $$
    By Fact \ref{fact1}(1), we have that $3m-t=(m+1)+(2m-t-1) \notin \msg(S)$. Similarly to the case $t=m-1$, we obtain that
    $$
        \msg(S)=\{m,m+1,\dots,2m-1\}\setminus\{2m-t\},
    $$
    namely, $\e(S)=m-1$.
\end{proof}

We obtain the following corollary from Corollary \ref{cor1} and Propositions \ref{prop1}, \ref{prop2}.

\begin{corollary}
    Let $S$ be a numerical semigroup such that $S\neq \N$. If $S$ is an almost symmetric numerical semigroup with maximal reduced type, then
    $$
        \t(S) \le \e(S)-1.
    $$
\end{corollary}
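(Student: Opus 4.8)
The plan is to reduce immediately to the trichotomy of Corollary \ref{cor1}. Since $S$ is an almost symmetric numerical semigroup with maximal reduced type and $S \ne \N$, exactly one of its three cases applies, and I would verify $\t(S) \le \e(S) - 1$ in each. The symmetric case is quickest: here $\t(S) = 1$ by definition, and since $S \ne \N$ the embedding dimension satisfies $\e(S) \ge 2$ (the only numerical semigroup requiring a single generator is $\N$), so $\t(S) = 1 \le \e(S) - 1$.

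For the half-line case $S = \Delta(m)$, note that $m \ge 2$ because $S \ne \N$. I would observe that a half-line is a MED-semigroup: its minimal generators are exactly $m, m+1, \dots, 2m-1$, so $\e(S) = m = \m(S)$. By the fact recalled in the Preliminaries, a MED-semigroup different from $\N$ has $\t(S) = \m(S) - 1$, giving $\t(S) = m - 1 = \e(S) - 1$. Alternatively one computes $\Ap(S,m) = \{0, m+1, \dots, 2m-1\}$ directly and checks that all nonzero elements are $\le_S$-maximal, yielding $\t(S) = m-1$.

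For the remaining family $S = \Delta(m) \setminus \{2m - t\}$ with $2 \le t < m$, the two propositions do all the work. Proposition \ref{prop1}(3) gives $\t(S) = t$, while Proposition \ref{prop2} gives $\e(S) = m$ when $t = m - 1$ and $\e(S) = m - 1$ when $t \le m - 2$. In the first subcase $\t(S) = m - 1 = \e(S) - 1$, and in the second $\t(S) = t \le m - 2 = \e(S) - 1$; both give the desired inequality.

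I expect no serious obstacle, since everything reduces to computations already carried out. The only point requiring a little extra care is the half-line case, which is \emph{not} covered by Propositions \ref{prop1}--\ref{prop2} (these treat only family (3)), so I must supply the values $\e(\Delta(m)) = m$ and $\t(\Delta(m)) = m-1$ separately through the MED observation above.
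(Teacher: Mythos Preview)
Your proposal is correct and follows essentially the same approach as the paper, which simply cites Corollary~\ref{cor1} together with Propositions~\ref{prop1} and~\ref{prop2} without further detail. Your explicit treatment of the symmetric and half-line cases (which are not covered by those two propositions) fills in what the paper leaves to the reader.
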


\begin{corollary}
    Let $t$ and $e$ be integers such that $2\leq t \leq e-1$. Then, there exists an almost symmetric numerical semigroup with maximal reduced type such that $\t(S)=t$ and $\e(S)=e$.
\end{corollary}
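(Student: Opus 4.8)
The plan is to produce the required semigroup explicitly from the family $S=\Delta(m)\setminus\{2m-t\}$ already analyzed in Propositions \ref{prop1} and \ref{prop2}, choosing the parameter $m$ so that both the type and the embedding dimension come out right. Since Proposition \ref{prop1} gives $\t(S)=t$ for every admissible $m$ (as long as $2\le t<m$), the only freedom I need to exploit is the value of $\e(S)$, which Proposition \ref{prop2} computes as $m$ in the case $t=m-1$ and as $m-1$ in the case $t\le m-2$.

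First I would set $m=e+1$ and take $S=\Delta(e+1)\setminus\{2e+2-t\}$. I then verify the hypotheses of the two propositions: we have $t\ge 2$ by assumption, and $t\le e-1<e+1=m$, so the condition $2\le t<m$ holds. Hence Proposition \ref{prop1} applies and shows that $S$ is an almost symmetric numerical semigroup with maximal reduced type satisfying $\t(S)=t$.

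Next I would check that this choice lands in the second branch of Proposition \ref{prop2}. Since $m-2=e-1\ge t$ by the hypothesis $t\le e-1$, we are in the regime $t\le m-2$, and therefore $\e(S)=m-1=e$. Combining the two computations produces a semigroup with $\t(S)=t$ and $\e(S)=e$, as required.

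The only point demanding any care — and the closest thing to an obstacle — is selecting $m$ so as to land in the correct branch of Proposition \ref{prop2}. The branch $t=m-1$ forces $\e(S)=m=t+1$, which would realize only the single value $e=t+1$; to reach an arbitrary $e\ge t+1$ one must instead use the branch $t\le m-2$, and the substitution $m=e+1$ is precisely what makes the inequality $t\le m-2$ coincide with the given constraint $t\le e-1$. Once this alignment is noticed, the argument is an immediate instantiation of the earlier results.
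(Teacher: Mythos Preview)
Your proof is correct and follows exactly the same approach as the paper: set $m=e+1$, observe that $2\le t\le m-2$, and apply Propositions \ref{prop1} and \ref{prop2} to the semigroup $S=\Delta(m)\setminus\{2m-t\}$ to obtain $\t(S)=t$ and $\e(S)=m-1=e$. The additional commentary you give about why the branch $t\le m-2$ is the relevant one is accurate, though the paper simply notes the inequality without elaboration.
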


\begin{proof}
    Let $m=e+1$. Note that $2\le t\le m-2$. Set $S=\Delta(m) \setminus\{2m-t\}$. By Proposition \ref{prop1} and Proposition \ref{prop2}, it follows that $S$ is an almost symmetric numerical semigroup with maximal reduced type such that $\t(S)=t$ and $\e(S)=e$.
\end{proof}

This corollary recovers \cite[Theorem 2.10]{zbMATH05316270}.

\begin{remark}\label{remark}
    Let $t$ and $m$ be integers such that $1 \leq t < m$. It is natural to consider the numerical semigroup $S=\Delta(m)\setminus\{2m-t\}$, where $t=1$. If $t=m-1$, then $S=\langle 2,5 \rangle$. Hence, it is enough to focus on the case $1=t\le m-2$.
    Using the same argument in the proof of Proposition \ref{prop1}, it follows that $S$ is a symmetric numerical semigroup with Frobenius number $2m-1$. This result is already pointed out in \cite[Lemma 2.8]{zbMATH05316270}. Moreover, applying the same argument in the proof of Proposition \ref{prop2}, we have the following statements:
    \begin{itemize}
        \item [(1)] $\Ap(S,m)=\{0,m+1,\dots,2m-2,3m-1\}$;
        \item [(2)] $\msg(S)=\{m,m+1,\dots,2m-2\}$;
        \item [(3)] $\e(S)=m-1$.
    \end{itemize}
\end{remark}

By Proposition \ref{prop1} and Remark \ref{remark}, we have the next result.

\begin{corollary}
    For any integers $m,F$ such that $2\le m<F<2m$, $\Delta(m) \setminus\{F\}$ is an almost symmetric numerical semigroup with maximal reduced type such that $\t(S)=2m-F$.
\end{corollary}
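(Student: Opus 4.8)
The plan is to reduce everything to the two results already established, Proposition~\ref{prop1} and Remark~\ref{remark}, by a change of parameter. Set $t = 2m - F$. The hypothesis $2 \le m < F < 2m$ forces $0 < 2m - F < m$, so that $1 \le t \le m-1$, and by definition $F = 2m - t$; thus the semigroup in question is exactly
$$
S = \Delta(m) \setminus \{2m - t\}, \qquad 1 \le t < m.
$$
That this is a numerical semigroup is already recorded before Proposition~\ref{prop1}. Since the claimed value $\t(S) = 2m - F$ is precisely $\t(S) = t$, it suffices to show that for every admissible $t$ the semigroup $S$ is almost symmetric with maximal reduced type and has type $t$. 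I would split according to whether $t \ge 2$ or $t = 1$.

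For $2 \le t < m$ there is nothing to do beyond quoting Proposition~\ref{prop1}: parts (3) and (4) give $\t(S) = t$ and that $S$ is almost symmetric, while the final ``in particular'' sentence of that proposition records maximal reduced type. Substituting back $t = 2m - F$ yields $\t(S) = 2m - F$, as required.

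For the boundary value $t = 1$, that is $F = 2m - 1$, Proposition~\ref{prop1} does not apply, and this is the only case needing separate care. Here Remark~\ref{remark} shows that $S = \Delta(m) \setminus \{2m-1\}$ is symmetric, so $\t(S) = 1 = 2m - F$. It then remains only to invoke the observation made in the introduction that every symmetric numerical semigroup is almost symmetric with maximal reduced type; combining these gives the assertion for $t = 1$.

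The argument therefore has no genuine obstacle: the content is carried entirely by Proposition~\ref{prop1} and Remark~\ref{remark}, and the main point is simply the bookkeeping of rewriting the hypothesis $m < F < 2m$ as $1 \le t = 2m - F < m$ and then isolating the degenerate case $t = 1$, where one must remember to read maximal reduced type off symmetry rather than off Proposition~\ref{prop1}.
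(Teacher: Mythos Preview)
Your proposal is correct and follows essentially the same route as the paper, which simply records the corollary as an immediate consequence of Proposition~\ref{prop1} (for $2\le t<m$) and Remark~\ref{remark} (for $t=1$) after the substitution $t=2m-F$. Your write-up just makes the bookkeeping of that substitution and the case split explicit.
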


\section{MED-semigroups with maximal reduced type}\label{MED}

In this section, we classify MED-semigroups with maximal reduced type.

\begin{lemma}\label{lemma3}
    Let $S$ be a numerical semigroup such that $S \neq \N$.
    Then,
    $$
        1\le s(S)\le \t(S)\le \m(S)-1.
    $$
\end{lemma}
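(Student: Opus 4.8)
The plan is to verify the three inequalities separately, reading them from left to right. Two of the assertions, namely $1 \le s(S) \le \t(S)$ and $\t(S) \le \m(S)-1$, have already been recorded in the Preliminaries, so the argument is largely a matter of assembling facts that are available; nonetheless I indicate how each link is obtained.

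For the leftmost inequality $1 \le s(S)$, I would simply exhibit one element of $\rPF(S)$. Since $\F(S)$ is the largest gap, $\F(S) \in \Z \setminus S$, and trivially $\F(S)-\m(S)+1 \le \F(S) \le \F(S)$, so $\F(S) \in \rPF(S)$ and hence $s(S) = |\rPF(S)| \ge 1$. For the middle inequality $s(S) \le \t(S)$, the key is the inclusion $\rPF(S) \subseteq \PF(S)$. Given $x \in \rPF(S)$, so that $x \notin S$ and $x \ge \F(S)-\m(S)+1$, I would take an arbitrary nonzero $s \in S$; since $s \ge \m(S)$, we get $x+s \ge \F(S)+1 > \F(S)$, and every integer exceeding $\F(S)$ lies in $S$. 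Thus $x + s \in S$ for all nonzero $s \in S$, which means $x \in \PF(S)$. Passing to cardinalities gives $s(S) \le \t(S)$.

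For the rightmost inequality $\t(S) \le \m(S)-1$, I would use the Apéry set description of $\PF(S)$ recalled in the Preliminaries. Setting $n = \m(S)$, the set $\Ap(S,\m(S)) = \{0 = w(0), w(1), \dots, w(\m(S)-1)\}$ has exactly $\m(S)$ elements, one in each residue class modulo $\m(S)$, and $\PF(S) = \{w - \m(S) \mid w \in \Maximals \Ap(S,\m(S))\}$. Since $S \neq \N$ forces $\m(S) \ge 2$, the element $0 = w(0)$ satisfies $0 \le_S y$ for every $y \in \Ap(S,\m(S))$ and so is strictly below some other element; hence $0$ is not $\le_S$-maximal. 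Therefore at most $\m(S)-1$ of the $w(i)$ are maximal, giving $\t(S) = |\Maximals \Ap(S,\m(S))| \le \m(S)-1$.

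None of these steps is genuinely difficult, and the whole statement is essentially bookkeeping that collects the bounds already noted. The only link requiring structural input is the last, where one leans on the bijection between $\PF(S)$ and the $\le_S$-maximal elements of the Apéry set; the point to handle with care there is the exclusion of $0$ as a maximal element, which is precisely where the hypothesis $S \neq \N$ is used.
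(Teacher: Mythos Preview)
Your proposal is correct and follows the same approach as the paper: the paper's proof simply recalls the two previously recorded facts $1 \le s(S) \le \t(S)$ (from the inclusion $\F(S) \in \rPF(S) \subset \PF(S)$ noted in the Introduction) and $1 \le \t(S) \le \m(S)-1$ (stated in the Preliminaries), and combines them. You do exactly this assembly, but in addition supply explicit justifications for each inequality, so your argument is a strictly more detailed version of the paper's two-line proof rather than a different route.
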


\begin{proof}
    We recall the inequality
    $$
        1\le  \t(S)\le\m(S)-1.
    $$
    Since $1\le s(S)\le \t(S)$, it follows that
    $$
        1\le s(S)\le \t(S)\le \m(S)-1.
    $$
\end{proof}

\begin{lemma}\label{lemma4}
    Let $S$ be a numerical semigroup. Then,
    $$
        \{w \in \Ap(S,\m(S))\mid w\ge \F(S)+1 \} =\{h+\m(S) \mid h \in \rPF(S)\}.
    $$
    In particular,
    $$
        \{w \in \Ap(S,\m(S))\mid w\ge \F(S)+1 \} \subset \Maximals\Ap(S,\m(S)).
    $$
\end{lemma}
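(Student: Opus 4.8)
The statement splits cleanly into the set equality and the ``in particular'' inclusion, and in both cases the engine is two elementary facts: that $\max\Ap(S,\m(S)) = \F(S)+\m(S)$ (recalled just before the lemma in the form $\F(S)=\max\Ap(S,\m(S))-\m(S)$), and that every integer strictly larger than $\F(S)$ belongs to $S$ by definition of the Frobenius number. The plan is to abbreviate $m=\m(S)$ and $F=\F(S)$, and to prove the equality by showing that the shift $h\mapsto h+m$ is a bijection between $\rPF(S)$ and the left-hand set, with inverse $w\mapsto w-m$.

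For the inclusion $\supseteq$, I would take $h\in\rPF(S)$, so that $F-m+1\le h\le F$ and $h\notin S$. The lower bound yields $h+m\ge F+1>F$, hence $h+m\in S$; meanwhile $h=(h+m)-m\notin S$ witnesses exactly the Apéry condition $h+m\in\Ap(S,m)$, and $h+m\ge F+1$ places $h+m$ in the left-hand set. For the reverse inclusion $\subseteq$, I would take $w\in\Ap(S,m)$ with $w\ge F+1$ and set $h=w-m$. The Apéry condition gives $h\notin S$, whence $h\le F$ since all integers exceeding $F$ lie in $S$; and $w\ge F+1$ gives $h\ge F-m+1$. Thus $h\in\rPF(S)$ and $w=h+m$, completing the bijection and hence the equality.

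For the concluding inclusion into $\Maximals\Ap(S,m)$, I would argue by contradiction. If some $w$ in the left-hand set were not maximal with respect to $\le_S$, there would exist $w'\in\Ap(S,m)$ with $w'=w+s$ for some $s\in S\setminus\{0\}$. Since $s\ge m$ and $w\ge F+1$, this forces $w'\ge F+m+1>F+m=\max\Ap(S,m)$, contradicting $w'\in\Ap(S,m)$. Hence every such $w$ is maximal.

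I expect no genuine obstacle here; the entire argument is definition-chasing once the two stated facts are in hand. The only point deserving a moment's care is that the upper bound $h\le F$ in the $\subseteq$ direction must be derived from the non-membership $h\notin S$ (via the Frobenius number) rather than assumed, so that the two range inequalities defining $\rPF(S)$ are both genuinely verified.
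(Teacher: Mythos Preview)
Your argument is correct and, for the set equality, follows the paper's proof essentially verbatim: both directions are handled by the same shift $w\mapsto w-\m(S)$ and the observation that any non-element of $S$ is at most $\F(S)$. For the ``in particular'' clause the paper does not spell anything out, presumably relying on the earlier facts $\rPF(S)\subset\PF(S)$ and $\PF(S)=\{w-\m(S)\mid w\in\Maximals\Ap(S,\m(S))\}$; your direct contradiction via $\max\Ap(S,\m(S))=\F(S)+\m(S)$ is an equally short and perfectly valid alternative.
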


\begin{proof}
    Let $w \in \Ap(S,\m(S))$ with $w\ge \F(S)+1$. Then $w-\m(S) \notin S$ and $w-\m(S) \ge \F(S)-\m(S)+1$, which implies that $w-\m(S) \in \rPF(S)$. Conversely, let $h \in \rPF(S)$. Since $h +\m(S) \ge \F(S)+1$, $h+\m(S) \in S$. Therefore, we obtain that $h+\m(S) \in \Ap(S,\m(S))$.
\end{proof}

\begin{proposition}\label{prop3}
    Let $S$ be a numerical semigroup such that $S \neq \N$, and let $\msg(S)=\{\m(S)=n_1<\cdots<n_e\}$. Then the following conditions are equivalent:
    \begin{itemize}
        \item [(1)] $s(S)=\m(S)-1$;
        \item [(2)] $S$ is a MED-semigroup with maximal reduced type;
        \item [(3)] $\F(S)+1\le n_2$.
    \end{itemize}
    In particular, if $S$ is a half-line, then $S$ is a MED-semigroup with maximal reduced type.
\end{proposition}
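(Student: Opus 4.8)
The equivalence $(1)\Leftrightarrow(2)$ is essentially formal, so I would dispose of it first. By Lemma \ref{lemma3} we have the chain $1\le s(S)\le\t(S)\le\m(S)-1$, so the hypothesis $s(S)=\m(S)-1$ forces $\t(S)=\m(S)-1$ as well; by the characterization of MED-semigroups recalled in Section \ref{Preliminaries}, $\t(S)=\m(S)-1$ is exactly the statement that $S$ is a MED-semigroup, while $s(S)=\t(S)$ is maximal reduced type. Conversely, a MED-semigroup with maximal reduced type has $\t(S)=\m(S)-1$ and $s(S)=\t(S)$, hence $s(S)=\m(S)-1$. Thus $(1)\Leftrightarrow(2)$ takes one or two lines.

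The real content is $(1)\Leftrightarrow(3)$, and the key tool is Lemma \ref{lemma4}. That lemma gives a bijection between $\rPF(S)$ and $\{w\in\Ap(S,\m(S)):w\ge\F(S)+1\}$, so that $s(S)=|\{w\in\Ap(S,\m(S)):w\ge\F(S)+1\}|$. Since $\Ap(S,\m(S))$ has exactly $\m(S)$ elements and $0$ is the only one guaranteed to satisfy $w\le\F(S)$ (here $\F(S)\ge1$ because $S\neq\N$), I would record the reformulation that $s(S)=\m(S)-1$ holds if and only if no $w\in\Ap(S,\m(S))$ satisfies $1\le w\le\F(S)$; equivalently, every nonzero element of $\Ap(S,\m(S))$ exceeds $\F(S)$.

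From this reformulation both implications with $(3)$ are short. For $(1)\Rightarrow(3)$: by Fact \ref{fact1}(3) each of $n_2,\dots,n_e$ lies in $\Ap(S,\m(S))\setminus\{0\}$, so under $(1)$ they all exceed $\F(S)$; in particular $n_2\ge\F(S)+1$. For $(3)\Rightarrow(1)$: if $n_2\ge\F(S)+1$, then every minimal generator other than $\m(S)$ exceeds $\F(S)$, so any element of $S$ lying in the range $1\le x\le\F(S)$ must be a nonzero multiple of $\m(S)$ — since the generators are positive, a sum not exceeding $\F(S)$ can only involve $\m(S)$. But a nonzero multiple $k\,\m(S)$ satisfies $(k-1)\m(S)\in S$, hence is not in $\Ap(S,\m(S))$; therefore $\Ap(S,\m(S))$ contains no $w$ with $1\le w\le\F(S)$, which is $(1)$. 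Finally, for the half-line statement I would just verify $(3)$: for $S=\Delta(\m(S))$ with $\m(S)\ge 2$ the gaps are $1,\dots,\m(S)-1$, so $\F(S)=\m(S)-1$, while $\m(S)+1$ is a minimal generator (a sum of two nonzero elements is at least $2\m(S)>\m(S)+1$), whence $n_2=\m(S)+1\ge\F(S)+1$; the equivalence then gives that $S$ is a MED-semigroup with maximal reduced type.

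I do not expect a deep obstacle here once Lemma \ref{lemma4} is in hand. The one step deserving care is the counting that converts $s(S)=\m(S)-1$ into ``no nonzero Ap\'ery element is $\le\F(S)$'': this uses that $0$ is the unique element of $\Ap(S,\m(S))$ certain to lie at or below $\F(S)$, which is exactly where the standing hypothesis $S\neq\N$ (so $\F(S)\ge1$) is needed. Everything else reduces to Fact \ref{fact1}(3) and the elementary observation that elements of $S$ below $\F(S)+1$ use only the generator $\m(S)$.
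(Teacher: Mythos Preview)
Your argument is correct and rests on the same two tools as the paper---Lemma~\ref{lemma3} for the $(1)\Leftrightarrow(2)$ part and Lemma~\ref{lemma4} for the link with~$(3)$---but the organization differs in a way worth noting. The paper proves the cycle $(1)\Rightarrow(2)\Rightarrow(3)\Rightarrow(1)$ and outsources $(2)\Rightarrow(3)$ to \cite[Corollary~3.10]{arXiv:2306.17069}; your version is self-contained, obtaining $(1)\Rightarrow(3)$ directly from Fact~\ref{fact1}(3). For $(3)\Rightarrow(1)$ the paper is slightly slicker: it uses the standard fact $n_2=\min(\Ap(S,\m(S))\setminus\{0\})$, which immediately forces every nonzero Ap\'ery element above $\F(S)$, whereas you re-derive this by arguing that any $s\in S$ with $1\le s\le\F(S)$ is a positive multiple of $\m(S)$ and hence not in $\Ap(S,\m(S))$. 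Both routes are fine; yours trades a one-line observation for avoiding an external citation, and your explicit verification of~$(3)$ for half-lines is a welcome addition that the paper leaves implicit.
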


\begin{proof}
    $(1) \Rightarrow (2)$. It follows from Lemma \ref{lemma3}.

    $(2) \Rightarrow (3)$. See \cite[Corollary 3.10]{arXiv:2306.17069}.

    $(3)\Rightarrow(1)$. We note that $\min(\Ap(S,\m(S))\setminus\{0\})=n_2$. By the assumption, we obtain that
    $$
        \Ap(S,\m(S))\setminus\{0\} \subset \{w \in \Ap(S,\m(S))\mid w \ge \F(S)+1\}.
    $$
    By Lemma \ref{lemma4}, we have $\m(S)-1\le s(S)$, namely, $s(S)=\m(S)-1 $.
\end{proof}

Let $m$ and $F$ be integers such that $2\le m<F$ and $m$ does not divide $F$. We define
$$
    \Delta(F,m):=\langle m \rangle \cup \{x \in \N \mid x\ge F+1\}.
$$
It is known that $\Delta(F,m)$ is a MED-semigroup with Frobenius number $F$ and multiplicity $m$ (see \cite[Lemma 2.1]{zbMATH08100532}).

\begin{theorem}
    Let $S$ be a numerical semigroup with $\m(S)<\F(S)$  and let $\msg(S)=\{\m(S)=n_1<\cdots<n_e\}$. Then the following conditions are equivalent:
    \begin{itemize}
        \item[(1)] $S$ is a MED-semigroup with maximal reduced type;
        \item[(2)] There exist integers $F,m$ such that $2\le m<F$, $m$ does not divide $F$ and $S=\Delta(F,m)$.
    \end{itemize}
\end{theorem}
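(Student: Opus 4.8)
The plan is to route both implications through Proposition~\ref{prop3}, which already identifies ``MED-semigroup with maximal reduced type'' with the purely combinatorial condition $\F(S)+1\le n_2$, together with the quoted fact that $\Delta(F,m)$ is a MED-semigroup with Frobenius number $F$ and multiplicity $m$. Thus the real content of the theorem is to show that the single inequality $\F(S)+1\le n_2$ is precisely what forces $S$ to have the shape $\langle m\rangle\cup\{x\ge F+1\}$, so almost all the work has been front-loaded into Proposition~\ref{prop3}.

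For $(2)\Rightarrow(1)$ I would take $S=\Delta(F,m)$ and compute $n_2$. By the cited result $\m(S)=m$ and $\F(S)=F$, so it suffices to verify $\F(S)+1\le n_2$ and then invoke Proposition~\ref{prop3}. The positive elements of $S$ not exceeding $F$ are exactly the positive multiples of $m$ that are $\le F$; all of these except $m$ itself lie in $\langle m\rangle$ as sums of smaller nonzero elements, hence are not minimal generators. Therefore no minimal generator other than $n_1=m$ can be $\le F$, giving $n_2\ge F+1=\F(S)+1$, and Proposition~\ref{prop3} yields that $S$ is a MED-semigroup with maximal reduced type.

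For $(1)\Rightarrow(2)$ I would set $m=\m(S)$ and $F=\F(S)$, so that $2\le m<F$ holds by hypothesis (note $m\ge 2$ since $\m(S)<\F(S)$ forces $S\neq\N$). By Proposition~\ref{prop3} we have $n_2\ge F+1$, so every minimal generator except $m$ exceeds $F$; consequently any $s\in S$ with $0<s\le F$ must be a sum of copies of $m$, that is, $S\cap[0,F]=\langle m\rangle\cap[0,F]$. Since $F=\F(S)$, every integer $\ge F+1$ belongs to $S$, whence $S=\langle m\rangle\cup\{x\ge F+1\}$. Finally, because all multiples of $m$ lie in $S$ while $F\notin S$, we conclude $m\nmid F$, so $S=\Delta(F,m)$ with the required constraints.

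The only point demanding care, and the closest thing to an obstacle, is the clean separation of $S$ into the arithmetic part $\langle m\rangle$ below the Frobenius number and the full half-line above it; this rests entirely on the jump $n_2\ge\F(S)+1$ supplied by Proposition~\ref{prop3}. Once that is in hand, the identification with $\Delta(F,m)$ and the verification that $m\nmid F$ are routine, and no genuinely hard estimate remains.
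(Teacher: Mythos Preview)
Your proposal is correct and follows essentially the same route as the paper: both directions are routed through Proposition~\ref{prop3} to obtain the key inequality $n_2\ge\F(S)+1$, after which the identification with $\Delta(F,m)$ is immediate. The only cosmetic difference is that for $(1)\Rightarrow(2)$ the paper argues via double containment of semigroups ($\Delta(\F(S),\m(S))\subset S$ trivially, and $\msg(S)\subset\Delta(\F(S),\m(S))$ from the inequality), whereas you describe $S\cap[0,F]$ directly; the content is identical.
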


\begin{proof}
    $(2)\Rightarrow(1)$. Note that $\F(S)+1\le n_2$.
    By Proposition \ref{prop3}, it follows that $\Delta(F,m)$ is a MED-semigroup with maximal reduced type.

    $(1)\Rightarrow(2)$. Since $\m(S)<\F(S)$, we obtain that $\m(S)\ge2$. Note that $\m(S)$ does not divide $\F(S)$. Clearly, $\Delta(\F(S),\m(S))\subset S$. By Proposition \ref{prop3}, $n_2 \ge \F(S)+1$. This inequality implies that
    $\{\m(S)=n_1<\cdots<n_e\} \subset \Delta(\F(S),\m(S))$. Then, $S \subset \Delta(\F(S),\m(S))$, namely,
    $S=\Delta(\F(S),\m(S)).$
\end{proof}

\begin{corollary}
    Let $S$ be a numerical semigroup. Then $S$ is a MED-semigroup with maximal reduced type if and only if one of the following conditions holds:
    \begin{itemize}
        \item[(1)] $S$ is a half-line;
        \item[(2)] There exist integers $F,m$ such that $2\le m<F$, $m$ does not divide $F$ and $S=\Delta(F,m)$.
    \end{itemize}
\end{corollary}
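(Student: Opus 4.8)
The plan is to obtain this corollary as a routine packaging of the preceding Theorem together with the in-particular clause of Proposition \ref{prop3}, organised around the dichotomy $\m(S)\ge\F(S)$ versus $\m(S)<\F(S)$. The key fact I would invoke is the recorded equivalence that $S$ is a half-line if and only if $\m(S)\ge\F(S)$; this is precisely what makes the two regimes complementary and exhaustive, so that every numerical semigroup falls into exactly one of them.

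For the direction from the classification to the conditions, I would assume $S$ is a MED-semigroup with maximal reduced type and split into two cases. When $\m(S)\ge\F(S)$, the semigroup $S$ is a half-line by the above equivalence, so condition (1) holds and there is nothing more to verify. When $\m(S)<\F(S)$, the hypotheses of the preceding Theorem are met, and its implication $(1)\Rightarrow(2)$ produces integers $F,m$ with $2\le m<F$, $m\nmid F$, and $S=\Delta(F,m)$; this is condition (2).

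For the converse, I would argue from whichever condition is assumed. If (1) holds, so that $S$ is a half-line, then the final assertion of Proposition \ref{prop3} immediately gives that $S$ is a MED-semigroup with maximal reduced type. If (2) holds, so that $S=\Delta(F,m)$ with the stated constraints, then the implication $(2)\Rightarrow(1)$ of the preceding Theorem gives the same conclusion.

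I do not expect any genuine obstacle here: all of the mathematical content is already carried by the preceding Theorem and by Proposition \ref{prop3}, and the corollary is only the bookkeeping that glues the non-half-line classification to the half-line case. The single point meriting a moment of attention is exhaustiveness, namely confirming via the half-line equivalence that the two cases cover every numerical semigroup; in particular one should check that the degenerate semigroup $S=\N$ is subsumed, and it is, since $\N=\Delta(1)$ is a half-line.
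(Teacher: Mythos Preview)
Your proposal is correct and mirrors exactly what the paper does: the corollary is stated without proof, being an immediate consequence of the preceding Theorem together with the in-particular clause of Proposition~\ref{prop3} and the observation (noted just after Theorem~\ref{MainTheorem}) that $S$ is a half-line if and only if $\m(S)\ge\F(S)$. Your case split along this dichotomy, including the check that $\N=\Delta(1)$ is covered, is precisely the intended routine packaging.
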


\section*{Acknowledgments}
I would like to thank Professor Satoru Fukasawa for valuable comments and suggestions.


\end{document}